\newtheorem{theorem}{Theorem}[section]
\newtheorem{proposition}[theorem]{Proposition}
\newtheorem{lemma}[theorem]{Lemma}
\newtheorem{question}[theorem]{Question}
\newtheorem{corollary}[theorem]{Corollary}
\theoremstyle{definition}
\newtheorem{definition}[theorem]{Definition}
\newcommand{\be}{\begin{enumerate}}
\newcommand{\ee}{\end{enumerate}}
\def\@seccntformat#1{%
  \protect\textup{\protect\@secnumfont
    \ifnum\pdfstrcmp{subsection}{#1}=0 \bfseries\fi
    \csname the#1\endcsname
    \protect\@secnumpunct
  }
}  
\newtheorem*{rep@theorem}{\rep@title}
\newcommand{\newreptheorem}[2]{%
\newenvironment{rep#1}[1]{%
 \def\rep@title{#2 \ref{##1}}%
 \begin{rep@theorem}}%
 {\end{rep@theorem}}}
\begin{document}

\raggedbottom
\pagenumbering{arabic}
\setcounter{section}{0}


\title{Gordian Distance and Complete Alexander Neighbors}

\author{Ana Wright}
\address{Department of Mathematics, University of Nebraska-Lincoln, Lincoln, NE 68588}
\email{awright@huskers.unl.edu}
\urladdr{https://anawright.github.io}

\begin{abstract}
    We call a knot $K$ a \emph{complete Alexander neighbor} if every possible Alexander polynomial is realized by a knot one crossing change away from $K$. It is unknown whether there exists a complete Alexander neighbor with nontrivial Alexander polynomial. We eliminate infinite families of knots with nontrivial Alexander polynomial from having this property and discuss possible strategies for unresolved cases.

    Additionally, we use a condition on determinants of knots one crossing change away from unknotting number one knots to improve KnotInfo's \cite{knotinfo} unknotting number data on 11 and 12 crossing knots. Lickorish introduced an obstruction to unknotting number one in \cite{lickorish}, which proves the same result. However, we show that Lickorish's obstruction does not subsume the obstruction coming from the condition on determinants.
\end{abstract}

\maketitle

\section{Introduction}

Unknotting number and Alexander polynomials are classical knot invariants, so it is natural to consider the interaction between crossing changes of a knot $K$ and the Alexander polynomial $\triangle_K(t)$. Gordian distance, the minimal number of crossing changes needed to change one knot to another, is a generalization of unknotting number. In 1978, Kondo proved that there exists a knot with unknotting number one realizing any given Alexander polynomial \cite{kondo}. A natural next question to ask is whether there exists a nontrivial Alexander polynomial such that, given any second Alexander polynomial, there exist a pair of knots with Gordian distance one realizing the two polynomials. In 2012, Kawauchi proved that this is the case for Alexander polynomials of slice type (Corollary 5.2 in \cite{kawauchi}). Jong's problem asks whether there exists a pair of Alexander polynomials such that any two knots realizing the polynomials have Gordian distance at least 2. In 2012, Kawauchi found a family of pairs of polynomials for which this is the case \cite{kawauchi}. One example is the Alexander polynomials of the trefoil and figure eight knot. This brings us to a knot property that we will study in this paper:
\begin{definition}
    A knot $K$ is a \textbf{complete Alexander neighbor} if for any Alexander polynomial $p(t)$, there exists a knot $K'$ such that $K$ and $K'$ are one crossing change apart and $\triangle_{K'}(t)=p(t)$.
\end{definition}Since the Alexander polynomial is multiplicative under connected sum of knots, Kondo's result stating that there exists a knot with unknotting number one realizing any given Alexander polynomial implies that any knot with trivial Alexander polynomial is a complete Alexander neighbor \cite{kondo}. However, it is unknown whether any knot with nontrivial Alexander polynomial has this property.

\begin{question}\label{open} (Raised on pg. 1017 of \cite{nakanishiokada})
Does there exist a complete Alexander neighbor $K$ with nontrivial Alexander polynomial?
\end{question}

While Kawauchi found \emph{polynomials} which are realized by knots with particular Gordian distances, this question asks for a \emph{knot} whose Gordian neighbors realize all Alexander polynomials.

We can obstruct knots from this property in a variety of ways. One is by considering the algebraic unknotting number, introduced by Murakami in \cite{murakami}, which is the minimal number of crossing changes necessary to transform a knot into a knot with Alexander polynomial one. Of course, a complete Alexander neighbor must have algebraic unknotting number one. The database Knotorious eliminates 1,526 knots of the 2,977 prime knots with 12 crossings or fewer from being complete Alexander neighbors using algebraic unknotting number \cite{borodzik2}.

The following theorem gives an improvement on work by Nakanishi and Okada \cite{nakanishiokada}. See Section 2 for a definition of Nakanishi index.

\begin{theorem}
\label{nakanishiindexdet}
Let $K$ be a knot with Nakanishi index 1, where $\det(K)\geq 3$ and where $\det (K)$ is composite or $\det (K) \equiv 1 \mod 4$. Then $K$ is not a complete Alexander neighbor.
\end{theorem}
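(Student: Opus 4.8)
The plan is to pass to the double branched cover $\Sigma_2(K)$, translate the complete-Alexander-neighbor property into a statement about which orders of $H_1$ are attainable under the surgery induced by a crossing change, and reduce the whole question to a clean statement about quadratic residues modulo $\det(K)$. I would begin with two reductions. Every odd integer $N\geq1$ is realized as $|p(-1)|$ by some Alexander polynomial $p(t)$ (for instance $p(t)=mt-(2m-1)+mt^{-1}$ gives the residues $\equiv3\bmod4$, $p(t)=-mt+(2m+1)-mt^{-1}$ the residues $\equiv1\bmod4$, and the unknot gives $N=1$), so a complete Alexander neighbor must have, for \emph{every} odd $N$, a crossing-change neighbor $K'$ with $\det(K')=N$. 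Second, Nakanishi index $1$ means the Alexander module is cyclic, so $H_1(\Sigma_2(K);\Z)\cong\Z/n$ is cyclic of order $n=\det(K)$ (necessarily odd), and its linking form is determined by a single value $\lambda(g,g)=a/n$ on a generator $g$, with $a\in(\Z/n)^{\times}$ well defined up to a square unit.

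Next I would set up the Montesinos trick: a crossing change relating $K$ to $K'$ lifts to a Dehn surgery on the knot $\gamma\subset\Sigma_2(K)$ obtained as the lift of the crossing arc, where the surgery slope and the meridian of $\gamma$ have geometric intersection number $2$; that is, $\Sigma_2(K')$ is a half-integer surgery on $\gamma$. Writing $[\gamma]=v\,g\in\Z/n$, so that $\lambda(\gamma,\gamma)=av^{2}/n$, the standard formula for the order of $H_1$ of a Dehn filling of a rational homology sphere (affine-linear in the slope, with linear term governed by the rational self-linking) yields $\det(K')=|nm+2av^{2}|$ for some odd integer $m$. Consequently every crossing-change neighbor satisfies $\pm\det(K')\equiv 2av^{2}\pmod n$ for some $v\in\Z/n$; in other words $\pm\det(K')$ lies in the set $2a\,S$, where $S=\{x^{2}:x\in\Z/n\}$ is the set of squares modulo $n$. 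The precise unit appearing in front of $v^2$ is immaterial, since $2a$ is a unit (as $n$ is odd) and $S$ is stable under multiplication by square units.

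It then remains to produce an odd $N$ with $N\notin 2a\,S$ and $-N\notin 2a\,S$: realizing such an $N$ as a determinant, via the first reduction, contradicts the congruence, so $K$ is not a complete Alexander neighbor. After dividing by the fixed unit $2a$, this is the number-theoretic claim that for odd $n\geq3$ there is a $w\in\Z/n$ with $w\notin S$ and $-w\notin S$ precisely when $n$ is \emph{not} a prime $\equiv3\bmod4$. If $n=p$ is prime with $p\equiv3\bmod4$ then $-1\notin S$, so for each unit $w$ exactly one of $w,-w$ is a nonzero square and no obstruction exists—this is exactly the excluded case. If $n\equiv1\bmod4$ is prime, then $-1\in S$, so any nonsquare unit $w$ has $w,-w$ both nonsquare. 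If $n=p^{e}$ with $e\geq2$, one checks directly that $w=p$ satisfies $p\notin S$ and $-p\notin S$. Finally, if $n$ has at least two distinct prime factors, the Chinese Remainder Theorem lets me force $w$ to be a nonsquare at one prime factor and $-w$ at another. These cases cover exactly the hypotheses ($n$ composite or $n\equiv1\bmod4$), completing the argument.

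The main obstacle is the second paragraph: rigorously extracting the congruence $\pm\det(K')\equiv 2av^{2}\pmod n$ from the Montesinos surgery, including the correct factor coming from the half-integer slope and a careful treatment of framings and the two signs of the crossing change. The cyclicity supplied by Nakanishi index $1$ is precisely what collapses the linking form to a single unit $a$ and turns the constraint into a one-variable quadratic-residue condition; the subsequent number theory is routine once the cases are organized as above.
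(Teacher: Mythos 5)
Your proposal is correct in substance but follows a genuinely different route from the paper. The paper stays entirely algebraic: it invokes Nakanishi--Okada's characterization (Lemma \ref{nakanishialexandermatrix}) of the Alexander polynomials realized by Gordian neighbors of $K$, which for a Nakanishi-index-one knot reads $\triangle_{K'}(t)=\pm\bigl(\triangle_K(t)m(t)-r(t)r(t^{-1})\bigr)$, and simply evaluates at $t=-1$ to get $\pm\det(K')\equiv -b^2 \pmod{\det K}$; it then applies a quadratic-residue lemma (Lemma \ref{quadresidues}) equivalent to your final paragraph. You instead pass to the double branched cover and run the Montesinos trick, obtaining $\pm\det(K')\equiv 2av^2\pmod n$ from the half-integer surgery and the linking form; since $2a$ is a unit mod the odd number $n$, this feeds into the same number-theoretic statement. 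Your route is essentially a ``relative'' version of Lickorish's linking-form obstruction, so the two proofs of Theorem \ref{nakanishiindexdet} mirror precisely the two obstructions to unknotting number one compared in Section 3 of the paper; what the paper's route buys is that the hard topological input is outsourced to a quotable iff-statement of Nakanishi--Okada, while yours is self-contained on the topology side and makes the role of the linking form transparent. Your case analysis for the residue lemma (primes $\equiv 1\bmod 4$ via $-1\in S$, prime powers via $w=p$, two distinct prime factors via CRT) is correct and arguably cleaner than the paper's elementary counting argument.

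Two caveats. First, the step you yourself flag --- the congruence $\pm\det(K')\equiv 2av^2\pmod n$ --- is the only real gap as written: you need (i) that the crossing circle, having linking number zero with $K$, lifts to two parallel copies of $\gamma$ cobounding an annulus, so that the lifted $\pm1$ surgeries combine into a single filling at distance exactly $2$ from the meridian, and (ii) the linearity of $|H_1|$ in the filling slope together with the identification of the longitude coefficient with $n\lambda(\gamma,\gamma)$. Both are standard but should be proved or precisely cited; the paper avoids this entirely. Second, your reduction that cyclicity of the Alexander module forces $H_1(\Sigma_2(K))\cong\Z/\det(K)$ with linking form $\lambda(g,g)=a/n$ for a unit $a$ is correct and is exactly where Nakanishi index one enters, matching the paper's use of the $1\times1$ Alexander matrix $[\triangle_K(t)]$.
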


In addition, we can characterize the knots eliminated by Kawauchi in Proposition \ref{kawauchisets} \cite{kawauchi} (see Theorem \ref{breadth2thm} in Section 2). This result together with Theorem \ref{nakanishiindexdet} yields the following corollary.

\begin{corollary}
\label{breadthcor}
    Let $K$ be a knot whose Alexander polynomial $\triangle_{K}(t)$ has breadth 2. Then $K$ is not a complete Alexander neighbor.
\end{corollary}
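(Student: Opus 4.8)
The plan is to read the determinant of $K$ directly off the shape of its Alexander polynomial, and then dispatch the resulting cases between Theorem \ref{nakanishiindexdet} and the characterization of Kawauchi's eliminated knots in Theorem \ref{breadth2thm}. First I would normalize: since $\triangle_K(t) \doteq \triangle_K(t^{-1})$ and $\triangle_K(1) = 1$, a breadth-$2$ polynomial can be written as
\[
\triangle_K(t) = a\,t - (2a-1) + a\,t^{-1}
\]
for a unique nonzero integer $a$. Evaluating at $t = -1$ gives $\det(K) = |1 - 4a| = |4a-1|$, an odd integer at least $3$, with $\det(K) \equiv 3 \pmod 4$ exactly when $a > 0$ and $\det(K) \equiv 1 \pmod 4$ exactly when $a < 0$. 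Thus the determinants arising from breadth-$2$ knots split into three regimes matched to the hypotheses of Theorem \ref{nakanishiindexdet}: $\det(K) \equiv 1 \pmod 4$; $\det(K) \equiv 3 \pmod 4$ composite; and $\det(K) \equiv 3 \pmod 4$ prime.

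Next I would verify that $K$ has Nakanishi index $1$, which is the input Theorem \ref{nakanishiindexdet} requires. The key observation is that the polynomial above is squarefree: its discriminant as a quadratic is $1 - 4a \neq 0$, so it has distinct roots and no repeated factors. Consequently the rational Alexander module is a torsion $\Q[t^{\pm 1}]$-module of squarefree order and hence cyclic. I would then upgrade this to cyclicity of the integral Alexander module $M = H_1$ of the infinite cyclic cover, over $\Lambda = \Z[t^{\pm 1}]$ — equivalently, to the statement that the first elementary ideal $E_1(M)$ equals $\Lambda$ — using the self-duality of the Blanchfield pairing together with the normalization $\triangle_K(1) = 1$, which forces $t-1$ to act invertibly on $M$. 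The conclusion is that every knot with breadth-$2$ Alexander polynomial has Nakanishi index $1$.

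With Nakanishi index $1$ established, the corollary follows by cases. If $\det(K) \equiv 1 \pmod 4$, or if $\det(K) \equiv 3 \pmod 4$ and is composite, then $\det(K) \geq 3$ and Theorem \ref{nakanishiindexdet} shows directly that $K$ is not a complete Alexander neighbor. The only surviving possibility is that $\det(K)$ is a prime congruent to $3 \pmod 4$; this is precisely the gap left open by Theorem \ref{nakanishiindexdet}, and I would close it by invoking Theorem \ref{breadth2thm}, the characterization of the knots eliminated by Kawauchi's Proposition \ref{kawauchisets}. Since these cases exhaust all odd determinants at least $3$, and hence all breadth-$2$ knots, $K$ is never a complete Alexander neighbor.

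I expect the main obstacle to be the Nakanishi-index-$1$ step, since passing from rational to integral cyclicity is not automatic: minimal numbers of generators over $\Z[t^{\pm 1}]$ can in general exceed those over $\Q[t^{\pm 1}]$. In the genus-one case the module is presented by the $2 \times 2$ matrix $tV^{T} - V$ for a Seifert matrix $V$ with $\det(V - V^{T}) = 1$, and one checks that the $1 \times 1$ minors of $tV^{T} - V$ generate the unit ideal, forcing $M$ to be cyclic; the general case must instead be phrased through the elementary ideal $E_1(M)$. The secondary point requiring care is confirming that Theorem \ref{breadth2thm} addresses exactly the prime determinants $\equiv 3 \pmod 4$ lying outside the reach of Theorem \ref{nakanishiindexdet}, so that the two results together leave no breadth-$2$ knot uncovered.
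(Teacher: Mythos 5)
Your endgame (reading $\det K = |1-4n|$ off the breadth-$2$ polynomial and splitting between Theorem \ref{nakanishiindexdet} and Theorem \ref{breadth2thm}) matches the paper's, but the step you yourself flag as the main obstacle --- that every knot with breadth-$2$ Alexander polynomial has Nakanishi index $1$ --- is a genuine gap: the claim is false, and no Blanchfield-duality argument can rescue it. Rational cyclicity does not upgrade to integral cyclicity, and the $1\times 1$ minors of $tV^{T}-V$ need not generate the unit ideal. Concretely, the genus-one Seifert matrix $V=\left(\begin{smallmatrix}0&2\\1&0\end{smallmatrix}\right)$ satisfies $\det(V-V^{T})=1$ and so is realized by a genus-one knot $K$ with $\triangle_K(t)\doteq -(2t-1)(t-2)\doteq -2(t+t^{-1})+5$, of breadth $2$ with $\det K=9$. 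The presentation matrix $tV^{T}-V=\left(\begin{smallmatrix}0&t-2\\2t-1&0\end{smallmatrix}\right)$ has off-diagonal entries generating the ideal $(t-2,\,2t-1)=(3,\,t+1)\neq \Z[t,t^{-1}]$, so tensoring the Alexander module with $\Z[t,t^{-1}]/(3,t+1)$ gives $(\Z/3\Z)^{2}$; the module needs two generators and the Nakanishi index is $2$. Worse, this example lands exactly where your argument has no fallback: $\det K=9\equiv 1\bmod 4$ forces you into the Theorem \ref{nakanishiindexdet} branch (Theorem \ref{breadth2thm} does not apply since $1-4n=9$ is a square), and the hypothesis of Theorem \ref{nakanishiindexdet} fails.

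The repair is cheap and is what the paper does: do not try to prove $n(K)=1$ unconditionally. If $u_a(K)>1$ then $K$ is automatically not a complete Alexander neighbor, because the trivial Alexander polynomial is not realized by any Gordian neighbor of $K$; and if $u_a(K)=1$ then $n(K)\leq u_a(K)=1$, with equality since $\triangle_K$ is nontrivial. With that case split in place of your module-theoretic lemma, the rest of your argument --- $\det K=|1-4n|\geq 3$; Theorem \ref{nakanishiindexdet} when $\det K\equiv 1\bmod 4$ or is composite; Theorem \ref{breadth2thm} together with Proposition \ref{kawauchisets} when $n>0$, where $1-4n<0$ is never a square --- goes through and coincides with the paper's proof.
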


Although we can eliminate infinitely many knots from Question \ref{open} using these methods, there are small knots which are not eliminated including $6_2$, $7_6$, $8_4$, $8_6$, $8_7$, and $8_{14}$. We do eliminate 2,528 of the 2,977 prime knots with crossing number 12 or less.

In their work on the relationship between crossing changes and Alexander polynomials, Nakanishi and Okada proved in \cite{nakanishiokada} a condition on the determinants $|\triangle_K(-1)|$ and $|\triangle_{K'}(-1)|$ of knots $K$ and $K'$ one crossing change apart where $K$ has unknotting number one. This gives a new obstruction to unknotting number one, which improves the KnotInfo \cite{knotinfo} data for five knots in the following theorem.

\begin{theorem}\label{unknottingnumberresult}
    The knots $11n_{162}, 12n_{805}, 12n_{814},12n_{844},$ and $12n_{856}$ have unknotting number greater than one.
\end{theorem}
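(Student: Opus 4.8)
The plan is to use the Nakanishi--Okada determinant condition from \cite{nakanishiokada} as a necessary condition for unknotting number one and to show that each of the five knots fails it. That condition constrains the determinants of Gordian neighbors of any unknotting-number-one knot: if $u(K)=1$, then every knot $K'$ one crossing change from $K$ has $|\triangle_{K'}(-1)|$ lying in a set determined by $d=|\triangle_{K}(-1)|$. Because this must hold for \emph{all} such neighbors, its contrapositive is exactly the obstruction I want --- exhibiting a single knot $K'$ one crossing change from $K$ whose determinant is forbidden forces $u(K)\geq 2$. This is strictly more flexible than comparing $K$ only to the unknot (the neighbor that would necessarily exist were $u(K)=1$), since I am free to probe many neighbors and only need one that violates the constraint.

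The first step is bookkeeping: for each of $11n_{162}$, $12n_{805}$, $12n_{814}$, $12n_{844}$, and $12n_{856}$, fix a diagram, record $d=\det(K)$, and write down the explicit numerical constraint that the condition places on the determinant of a Gordian neighbor. The second and central step is the search: changing one crossing at a time in the fixed diagram (and, if needed, in Reidemeister-equivalent diagrams), I identify the resulting knot $K'$ by matching its classical invariants against KnotInfo \cite{knotinfo} and compute $\det(K')$. As soon as one neighbor has a determinant that the Nakanishi--Okada condition forbids, the knot $K$ is eliminated from having unknotting number one.

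The reason the underlying condition holds is the Montesinos trick: $u(K)=1$ forces the double branched cover $\Sigma_2(K)$ to be half-integer surgery $S^3_{d/2}(\kappa)$ on a knot in $S^3$, and a further crossing change $K\to K'$ lifts to surgery on a specific curve in $\Sigma_2(K)$, so the order $|H_1(\Sigma_2(K'))|=\det(K')$ of a neighbor's branched-cover homology cannot be arbitrary. I would invoke this only through the packaged statement in \cite{nakanishiokada} rather than rederive the constraint, and I would separately confirm (as the later discussion promises) that the neighbors I produce are not already ruled out by Lickorish's obstruction \cite{lickorish}, so that the determinant obstruction is genuinely doing the work.

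The hard part will be the search itself rather than any single determinant computation. A crossing change in a minimal diagram need not produce a determinant-violating neighbor, so the pool of candidates --- crossing changes across several diagrams of each knot --- is large, and for each candidate I must identify $K'$ precisely enough to pin down $\det(K')$ and must verify that the crossing change really realizes Gordian distance one. Getting those identifications correct is where the care lies; once a single violating neighbor is exhibited for each of the five knots, the conclusion $u(K)>1$ follows immediately from the contrapositive of the Nakanishi--Okada condition.
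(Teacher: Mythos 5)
Your proposal matches the paper's proof: the paper applies exactly the Nakanishi--Okada determinant condition (Proposition 13 of \cite{nakanishiokada}, stated as Lemma \ref{detcondition}) in contrapositive form, finding for each of the five knots a Gordian neighbor --- obtained by flipping one sign in the KnotInfo DT code --- whose determinant $\delta$ has both $\delta$ and $-\delta$ as quadratic nonresidues modulo $\det K$ (e.g.\ $9_{45}$ with determinant $23$ for $11n_{162}$, which has determinant $55$). The only inessential difference is that your side-check against Lickorish's obstruction is not needed for the proof itself; the paper treats that comparison as a separate discussion.
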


We also give a second proof of this theorem using an obstruction by Lickorish in \cite{lickorish}. The two obstrutions are different; in particular, there are 17 examples of knots with 11 to 13 crossings where Lickorish's obstruction does not apply, but Nakanishi and Okada's condition on determinants obstructs unknotting number one. In addition, there is always the possibility expand the search to apply this obstruction from the condition on determinants, but we can determine from a single diagram of a knot whether or not Lickorish's obstruction applies.

\subsection{Acknowledgements}

We are extremely grateful to Mark Brittenham and Alex Zupan for their mentorship, advice, and support. We are grateful to Charles Livingston for his interest in this project and for helpful conversations. This work was completed while the author was a guest at the Max Planck Institute for Mathematics in Bonn and we are extremely grateful to MPIM for its support and hospitality. The author was partially supported by NSF grant DMS-2005518.

\section{The Relationship Between the Alexander Polynomial and Gordian Distance}

 Now we will find families of knots which are not complete Alexander neighbors. First we need to introduce some definitions.

 \begin{definition}
     The \textbf{algebraic unknotting number} $u_a(K)$ of a knot $K$ is the minimal number of crossing changes necessary to change $K$ to a knot with trivial Alexander polynomial.
 \end{definition}

Notice that any knot with algebraic unknotting number greater than one is not a complete Alexander neighbor since the trivial Alexander polynomial is not realized by any Gordian neighbor.
 
\begin{definition}
    The \textbf{Nakanishi index} $n(K)$ of a knot $K$ is the minimal $n$ such that the Alexander module of $K$ (the first homology of the infinite cyclic cover of the exterior of $K$, viewed as a $\mathbb{Z}[t,t^{-1}]$ module) is presented by an $n\times n$ matrix \cite{nakanishiindex}.
\end{definition}

Every crossing change can be described with a $\pm 1$ surgery along a loop around the crossing with linking number $0$ with the knot, so for any knot there exists a collection of these surgeries describing crossing changes resulting in the unknot. Levine \cite{levine} and Rolfsen \cite{rolfsen} introduced a surgery view of the Alexander matrix, which Nakanishi and Okada also describe in Section 2 of \cite{nakanishiokada}, so it is always possible to build an $n\times n$ Alexander matrix where $n$ is the unknotting number. However, in some cases there exists a smaller Alexander matrix. Also notice that algebraic unknotting number is a lower bound for unknotting number since the unknot has trivial Alexander polynomial. Furthermore, Nakanishi index is a lower bound for algebraic unknotting number (see Section 4.1 of \cite{borodzik}), so \[n(K)\leq u_a(K)\leq u(K)\] for any knot $K$ where $u(K)$ is the unknotting number of $K$. Therefore we can restrict our investigation of Question \ref{open} to knots with Nakanishi index one.
 
 Now we will eliminate families of knots with Nakanishi index one. First, we need to prove a lemma.

\begin{lemma}
\label{quadresidues} Let $n\geq 3$ be an odd integer. Then $n$ is composite or $n\equiv 1 \mod 4$ if and only if there exists some integer $d$ such that both $d$ and $-d$ are quadratic nonresidues mod $n$. 
\end{lemma}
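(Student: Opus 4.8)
The plan is to recast the statement set-theoretically. Let $S \subseteq \Z/n\Z$ denote the set of squares modulo $n$, that is $S = \{x^2 \bmod n : x \in \Z\}$, and write $-S = \{-s : s \in S\}$. Here ``quadratic nonresidue'' means simply ``not a square mod $n$'' (including non-units), since the statement quantifies over all integers $d$; I will note below that this convention is essential. An integer $d$ witnesses the right-hand side exactly when $d \notin S$ and $-d \notin S$, so such a $d$ exists if and only if $S \cup (-S) \ne \Z/n\Z$. Since $n \ge 3$ is odd, the negation of ``$n$ composite or $n \equiv 1 \pmod 4$'' is precisely ``$n$ is prime and $n \equiv 3 \pmod 4$''; hence it suffices to prove that a witness exists if and only if $n$ is \emph{not} a prime congruent to $3$ mod $4$.

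For the direction that no witness exists when $n = p$ is prime with $p \equiv 3 \pmod 4$, I would use that $-1$ is a nonresidue mod $p$. Given any $d$: if $p \mid d$ then $d \equiv 0$ is a square; otherwise $d$ is a unit and, since $-1$ is a nonresidue, exactly one of $d$ and $-d = (-1)d$ is a square. In either case $d$ and $-d$ are not both nonresidues, so no witness exists.

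For the converse I would assume $n$ is not a prime $\equiv 3 \pmod 4$ and produce a witness, splitting on whether $-1$ is a square mod $n$. If $-1 \in S$, then for any nonresidue $d$ the element $-d = (-1)d$ is again a nonresidue: a relation $-d = y^2$ together with $-1 = u^2$ for a unit $u$ would give $d = (uy)^2 \in S$. A nonresidue exists because $n \ge 3$ forces $(\Z/n\Z)^\times$ to have even order with $-1 \ne 1$, so squaring is not surjective on units; any such $d$ is then a witness. If instead $-1 \notin S$, then $n$ cannot be prime (a prime with $-1$ a nonresidue is $\equiv 3 \pmod 4$, which is excluded), so $n$ is composite, and I would split into two subcases.

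These two subcases are the heart of the argument. If $n = p^e$ is a prime power with $e \ge 2$, I would take $d = p$: a nonzero square modulo $p^e$ has even $p$-adic valuation (its valuation is $< e$ and equals twice that of any square root), whereas $v_p(p) = 1$ is odd with $1 < e$, so both $p$ and $-p$ are nonresidues. If instead $n$ has at least two distinct prime factors, write $n = ab$ with $\gcd(a,b) = 1$ and $a, b > 1$; using that nonresidues exist modulo any odd modulus exceeding $1$, choose by the Chinese Remainder Theorem an integer $d$ that is a nonresidue mod $a$ and with $-d$ a nonresidue mod $b$. Then $d$ is a nonresidue mod $n$ (being one mod $a$) and $-d$ is a nonresidue mod $n$ (being one mod $b$), giving a witness. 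I expect the main subtlety to be exactly this composite analysis: when $-1$ is a nonresidue the naive ``take any nonresidue and negate it'' argument fails, and one must instead exploit odd valuations in the prime-power case and mix residues across coprime factors in the multi-prime case. A secondary point requiring care is the convention that $d$ ranges over all integers, so that non-unit nonresidues are allowed; without it the lemma would already fail at $n = 9$, where $d = 3$ is the witness.
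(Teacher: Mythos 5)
Your proof is correct, but it reaches the conclusion by a genuinely different route than the paper, most visibly in the composite case. The paper works throughout with the squaring map $f(x)=x^2$ on $\mathbb{Z}_n$ and counts preimages: for prime $n$ it argues via the first supplement to quadratic reciprocity essentially as you do, but for composite $n=ab$ it exhibits the explicit collision $f(b-a)\equiv f(b+a)\equiv a^2+b^2 \mod ab$, concludes that strictly fewer than half of the nonzero classes are residues, and extracts a pair $\{d,-d\}$ of nonresidues by counting over the $(n-1)/2$ such pairs; this requires a small auxiliary case analysis (the cases $a=b$ and $(a,b)=(3,5)$) to guarantee the colliding elements are distinct, and it is uniform but non-constructive. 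You instead split on whether $-1$ is a square, dispatch the case $-1\in S$ with an arbitrary nonresidue, and in the remaining composite case produce explicit witnesses: $d=p$ for $n=p^e$ via parity of the $p$-adic valuation, and a CRT-mixed $d$ when $n$ has two coprime factors greater than one. Your version buys explicit witnesses and avoids the paper's inequality bookkeeping, at the price of invoking the Chinese Remainder Theorem and valuations; it also surfaces a point the paper leaves implicit, namely that $d$ must be allowed to be a non-unit (e.g.\ $n=9$ forces $d=3$), which in the paper is hidden inside the case $a=b$, where the extra collision comes from $f(a)=0$.
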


\begin{proof}
Let $n\geq 3$ be an odd integer and let $f:\mathbb{Z}_n\rightarrow \mathbb{Z}_n$ such that $f(x)=x^2$ for all $x \in \mathbb{Z}_n$, so the image of $f$ is the set of quadratic residues mod $n$ together with $0$.

First notice that for every nonzero $y$ such that there exists $x \in \mathbb{Z}_n$ where $f(x)=y$ (equivalently, every quadratic residue mod $n$), we have
\[f(n-x)=(n-x)^2=n^2-2nx+x^2\equiv x^2=f(x)=y \mod n.\]
Since $n$ is odd, $n-x\not\equiv x \mod n$, so at least two distinct elements of $\mathbb{Z}_n$ map to each quadratic residue. Therefore, at most half the nonzero elements of $\mathbb{Z}_n$ are quadratic residues.

Consider the case where $n$ is prime. Then by the law of quadratic reciprocity, if $n\equiv 3 \mod 4$, then the negative of a residue modulo $n$ is a nonresidue and the negative of a nonresidue is a residue, as desired. Also by the law of quadratic reciprocity, if $n\equiv 1 \mod 4$, then the negative of a residue modulo $n$ is a residue and the negative of a nonresidue is a nonresidue. Since at most half the nonzero elements of $\mathbb{Z}_n$ are quadratic residues and $n\geq 3$, there exists a nonzero quadratic nonresidue $d$, so $d$ and $-d$ are quadratic nonresidues mod $n$ as desired.

Otherwise, $n$ is composite, so $n=ab$ for some positive odd integers $a$ and $b$ both greater than one. Assume without loss of generality that $a\leq b$. First we will show that one of the following must be true
\begin{enumerate}[(a)]
    \item $1\leq b-a<b+a \leq \frac{ab}{2}$

    \item $a=b$
    
    \item $a=3$ and $b=5$
\end{enumerate}
We will assume (a) is false and show that (b) or (c) must be true. Let $1>b-a$ or $b+a>\frac{ab}{2}$. In the case where $b-a<1$ we have $b-1<a\leq b$, so (b) holds. 

In the case where $b+a>\frac{ab}{2}$, we have that $a<\frac{b}{\frac{b}{2}-1}$. Then for $b\geq 6$ we have
\[1<a<\frac{b}{\frac{b}{2}-1}\leq 3\]
which is impossible since $a$ is an odd integer, so $b<6$. Since $b$ is odd and greater than one, we have that $b=3$ or $b=5$. Since $1<a\leq b$ and $a$ is odd, in the case that $b=3$, (b) holds and in the case that $b=5$ either $a=3$ so (c) holds or $a=5$ so (b) holds.

Consider the case (a) where $1\leq b-a \leq \frac{ab}{2}$ and $1\leq a+b \leq \frac{ab}{2}$. Notice that $f(b-a)$, $f(a+b)$, $f(n-(b-a))$, and $f(n-(a+b))$ are all congruent to $a^2+b^2$ mod $n=ab$. Since $1\leq b-a<b+a\leq \frac{ab}{2}$, we have that \[1\leq b-a<b+a\leq\frac{ab}{2}=n-\frac{ab}{2}\leq n-(a+b)<n-(b-a)\leq n-1,\] so at least three distinct elements of $\mathbb{Z}_n$ map to the same quadratic residue $a^2+2ab+b^2$ in $\mathbb{Z}_n$. Therefore, strictly less than half of the nonzero elements of $\mathbb{Z}_n$ are quadratic residues mod $n$. Thus, there exists some quadratic nonresidue $d$, so $d$ and $-d$ are quadratic nonresidues mod $n$ as desired.

Consider the case (b) where $a=b$. Then we have $a\not\equiv 0 \mod n$ and $f(a)=a^2=n\equiv 0\mod n$, so strictly less than half of the nonzero elements of $\mathbb{Z}_n$ are quadratic residues mod $n$. Therefore, there exists some quadratic nonresidue $d$, so $d$ and $-d$ are quadratic nonresidues mod $n$ as desired.

Consider the case (c) where $a=3$ and $b=5$. Then notice that $2$ and $-2$ are quadratic nonresidues mod $15$ as desired.
\end{proof}

We will use this lemma to improve the following result by Nakanishi and Okada.

\begin{lemma}
\label{nakanishialexandermatrix}
(Propositions 5 and 6 in \cite{nakanishiokada}) Let $K$ be a knot and let $A_K(t)=(a_{ij}(t))_{1\leq i,j\leq n}$ be an Alexander matrix of $K$ such that
\begin{enumerate}[(a)]
    \item $a_{ij}(t)=a_{ij}(t^{-1})$ for all $1\leq i,j\leq n$ and
    \item $a_{ij}(1)=\begin{cases}
    1 &\text{if }i=j\\
    0 &\text{if }i\neq j
    \end{cases}$
\end{enumerate}
Then a Laurent polynomial $p(t)$ is the Alexander polynomial of some knot $K'$ one crossing change away from $K$ if and only if there exist Laurent polynomials $r_1(t),...,r_n(t)$, and $m(t)$ such that
\begin{enumerate}[(1)]
    \item $m(t)=m(t^{-1})$, $m(1)=\pm 1$, and $r_i(1)=0$ for all $1\leq i\leq n$, and
    
    \item $p(t)=\pm\det\begin{pmatrix}&&&r_1(t^{-1})\\
    &A_K(t)&&\vdots\\
    &&&r_n(t^{-1})\\
    r_1(t)&\dots&r_n(t)&m(t)\end{pmatrix}$
\end{enumerate}
\end{lemma}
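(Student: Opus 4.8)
The plan is to prove both implications through the surgery-theoretic description of the Alexander matrix due to Levine and Rolfsen described above, under which bordering $A_K(t)$ by a new row and column corresponds exactly to adjoining one new surgery curve to a presentation of $K$. Recall that conditions (a) and (b) are precisely the normalizations satisfied by the equivariant linking matrix coming from such a presentation: we may realize the unknotting of $K$ by $\pm 1$ surgeries on a collection of curves $c_1, \dots, c_n$, each an unknot with $\lk(c_i, K) = 0$, and the entries $a_{ij}(t)$ record the equivariant linking numbers of lifts $\tilde c_i, \tilde c_j$ in the infinite cyclic cover of the exterior of the resulting unknot. Symmetry (a) reflects the Hermitian symmetry of this equivariant linking form, while (b) records that at $t = 1$ the form recovers the ordinary linking numbers, so that the curves have vanishing pairwise linking and unit self-framing there.

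For the forward direction, suppose $K'$ is obtained from $K$ by a single crossing change. Such a crossing change is realized by $\varepsilon = \pm 1$ surgery on a small unknotted loop $c$ encircling the two strands at the crossing, with $\lk(c, K) = 0$ and framing $\varepsilon$. I would then observe that surgering along $c, c_1, \dots, c_n$ carries $K'$ to the unknot, so these $n+1$ curves form a surgery presentation of $K'$. Since the $c_i$-block of the associated $(n+1) \times (n+1)$ equivariant linking matrix is unchanged and equals $A_K(t)$, the matrix has exactly the bordered shape in (2), where $r_i(t)$ is the equivariant linking of $c$ with $\tilde c_i$ and $m(t)$ is the framing-corrected equivariant self-linking of $c$. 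The defining properties of a crossing-change curve then yield the constraints in (1): the symmetry $m(t) = m(t^{-1})$ and the pairing of $r_i(t)$ with $r_i(t^{-1})$ across the border come from Hermitian symmetry of the form, $m(1) = \pm 1$ records the framing $\varepsilon$, and $r_i(1) = 0$ records $\lk(c, c_i) = 0$. Finally $\triangle_{K'}(t) = \pm\det$ of this bordered matrix because it is again a normalized Alexander matrix, so its determinant computes the Alexander polynomial up to the usual unit $\pm t^k$, pinned to $\pm$ by the symmetry.

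For the reverse direction, I am handed Laurent polynomials $r_1(t), \dots, r_n(t), m(t)$ satisfying (1) and must manufacture an honest knot $K'$ one crossing change from $K$ realizing $p(t)$. The strategy is to build a curve $c$ in the exterior of the unknot whose equivariant linking numbers against the $\tilde c_i$ and against itself equal the prescribed $r_i(t)$ and $m(t)$, then push the surgeries back to recover a crossing-change curve for $K$ in $S^3$. The conditions $r_i(1) = 0$ and $m(1) = \pm 1$ are exactly what guarantee that $\varepsilon = \pm 1$ surgery on $c$ keeps the ambient manifold $S^3$ and exhibits a genuine crossing change rather than a more general surgery, so that the output is a knot whose Alexander matrix is the bordered matrix and hence whose polynomial is $p(t)$.

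The main obstacle is this geometric realization in the reverse direction: producing an embedded curve with prescribed symmetric equivariant linking data. I expect to handle it by starting from a curve achieving the correct ordinary linking numbers at $t = 1$, namely $0$ with each $c_i$ and the right self-framing, and then adjusting its higher equivariant linking coefficients one at a time using band sums and finger moves of $c$ across the translates $t^k \tilde c_i$, each of which changes a single coefficient of $r_i(t)$ by a controlled amount while preserving the $t = 1$ normalization. The symmetry constraint $m(t) = m(t^{-1})$ and the pairing between the two borders must be respected throughout, which is what ties the construction to the Hermitian structure of the form. Verifying that these moves suffice to hit every admissible tuple, and that the resulting $c$ can be taken disjoint from the surgery solid tori so that $\pm 1$ surgery genuinely descends to a crossing change on $K$, is the technical heart of the argument.
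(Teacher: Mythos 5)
The paper does not actually prove this lemma: it is imported wholesale from Nakanishi--Okada (their Propositions 5 and 6), so there is no in-paper argument to compare against. Judged on its own terms, your sketch follows the natural surgery-theoretic route and you are candid that the realization step in the reverse direction is unfinished, but there are two concrete gaps beyond the one you flag. First, your forward direction begins by assuming that the given $n\times n$ matrix $A_K(t)$ \emph{is} the equivariant linking matrix of a presentation of $K$ by $\pm 1$ surgeries on $n$ crossing-change circles. That is not the hypothesis: the hypothesis is only that $A_K(t)$ is \emph{some} Alexander matrix satisfying the symmetry and normalization conditions (a) and (b). The case the paper actually uses is $n=1$ with $A_K(t)=[\triangle_K(t)]$ for a knot of Nakanishi index one, and such a knot need not be unknottable by a single surgery, so no one-curve surgery presentation need exist. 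A complete proof must either show that every matrix satisfying (a) and (b) is realized by a suitable geometric presentation, or work algebraically (via the Blanchfield pairing and $S$-equivalence) to show that the set of achievable bordered determinants does not depend on which presentation matrix one starts from. Your sketch silently elides this.

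Second, in the reverse direction you assert that $r_i(1)=0$ and $m(1)=\pm 1$ are ``exactly what guarantee'' that $\pm 1$ surgery on $c$ effects a genuine crossing change. That is false: an unknotted $\pm 1$-framed curve with linking number zero with the knot effects a full twist along a disk meeting the knot in $2k$ points, which is a single crossing change only when $k=1$; for $k>1$ it is a generalized crossing change, and the conclusion ``$K'$ is one crossing change from $K$'' fails. The conditions in (1) are necessary algebraic shadows of being a crossing-change circle, not sufficient ones. The construction must explicitly produce $c$ bounding a disk that meets the knot in exactly two points (e.g., start with a clasp circle at a nugatory crossing) and only then adjust the equivariant linking coefficients by band-summing with meridians of translates of the $\tilde c_i$, checking that these moves preserve the two-point intersection with the knot. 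Since both the base-point of your argument (the presentation) and the end-point (the crossing-change condition) are unjustified, the proposal as written does not establish the lemma, though the overall architecture is the right one.
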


We now have all the tools to prove Theorem \ref{nakanishiindexdet}.

\begin{proof}[Proof of Theorem \ref{nakanishiindexdet}]
Let $K$ be a knot with Nakanishi index 1, where $\det(K)\geq 3$ and $\det (K)$ is composite or $\det (K) \equiv 1 \mod 4$. Since knot determinants are odd, by Lemma \ref{quadresidues}, there exists some quadratic nonresidue $d\mod \det(K)$ such that $-d$ is also a quadratic nonresidue$\mod \det(K)$.
    
    Also notice that since $K$ has Nakanishi index 1, the 1 by 1 matrix $[\triangle_K(t)]$ is an Alexander matrix of $K$, which satisfies conditions (a) and (b) in Lemma \ref{nakanishialexandermatrix}.
    
    Let $K'$ be a knot one crossing change away from $K$. Then, by Lemma \ref{nakanishialexandermatrix}, we have $\triangle_{K'}(t)=\triangle_K(t)\cdot m(t)-r(t)\cdot r(t^{-1})$ for some $m(t), r(t)\in \mathbb{Z}[t,t^{-1}]$ such that $m(t^{-1})=m(t)$, $|m(t)|=1$, and $r(1)=0$. Then
    \begin{align*}
        \det(K')= |\triangle_{K'}(-1)|&=|\triangle_{K}(-1)\cdot m(-1)-(r(-1))^2|\\
        \det(K')&=|\pm\det(K)\cdot m(-1)-(r(-1))^2|\\
        \text{so  }\det(K')&=|a\det(K)-b^2|
    \end{align*}
    for some $a,b\in \mathbb{Z}$.
    
    Consider the case where $a\det(K)\geq b^2$. Then
    \begin{align*}
        \det(K')&=a\det(K)-b^2\\
        b^2&=-\det(K')+a\det(K)
    \end{align*}
    so $-\det(K')$ is a quadratic residue$\mod \det(K)$. Therefore, $-\det(K')\not\equiv d\mod \det(K)$ and $-\det(K')\not\equiv -d\mod \det(K)$, so $\det(K')\not\equiv |d|\mod \det(K)$.
    
    Otherwise, $a\det(K)<b^2$. Then
    \begin{align*}
        \det(K')&=b^2-a\det(K)\\
        b^2&=\det(K')+a\det(K)
    \end{align*}
    so $\det(K')$ is a quadratic residue$\mod \det(K)$. Therefore, $\det(K')\not\equiv d\mod \det(K)$ and $\det(K')\not\equiv - d\mod \det(K)$, so $\det(K')\not\equiv |d|\mod \det(K)$.
    
    Since the knot determinants are exactly the odd natural numbers, there exists an Alexander polynomial $p(t)$ such that $|p(-1)|\equiv |d|\mod \det(K)$. As argued above, this Alexander polynomial is not realized by any knot one crossing change away from $K$.
\end{proof}

Kawauchi also eliminated families of knots from being complete Alexander neighbors in the following result.

\begin{proposition}\label{kawauchisets}
(Corollary 4.2 from \cite{kawauchi}) Let $p$ be any prime number, and $n$, $\ell$ integers coprime to $p$. If $p$ is an odd prime, then assume that $p$ is coprime to $1-4n$ and that $1-4n$ is a quadratic nonresidue mod $p$. Consider a set of Alexander polynomials \[S_{p,n,\ell}=\{n(t+t^{-1})+1-2n\}\cup\{(n+\ell p^{2s+1})(t+t^{-1})+1-2(n+\ell p^{2s+1})| s \in \mathbb{N}_0\}\] and let $a, b \in S_{p,n,\ell}$ such that $a\neq b$. Then for any knots $K_a, K_b$ such that $\triangle_{K_a}=a$ and $\triangle_{K_b}=b$, we have that $K_a$ and $K_b$ must have Gordian distance at least two.
\end{proposition}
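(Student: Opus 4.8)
The plan is to prove the statement in contrapositive form: assuming knots $K_a,K_b$ realizing distinct polynomials $a,b\in S_{p,n,\ell}$ have Gordian distance one, I would derive a contradiction. Write each element of the set as $\triangle_m(t)=m(t+t^{-1})+1-2m$ with $m\in\{n\}\cup\{n+\ell p^{2s+1}: s\in\N_0\}$, and record the two features the hypotheses are designed to produce. First, every such $m$ is coprime to $p$ (it is congruent to $n$ mod $p$), so $m$ is a $p$-adic unit. Second, the ``discriminant'' $\triangle_m(-1)=1-4m$ is congruent to $1-4n$ mod $p$, hence is a $p$-adic unit, and the hypothesis that $1-4n$ is a quadratic nonresidue mod $p$ says exactly that $1-4m$ is a nonsquare unit in $\Q_p$ generating the \emph{unramified} quadratic extension $\Q_{p^2}/\Q_p$. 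For $p=2$ this is automatic and requires no residue hypothesis: $m$ odd forces $1-4m\equiv 5\bmod 8$, which already generates the unramified quadratic extension of $\Q_2$.

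The key step is to feed the crossing-change identity into this local picture. When $K_a$ has Nakanishi index one, $[\triangle_{K_a}(t)]$ is a $1\times 1$ Alexander matrix satisfying the hypotheses of Lemma \ref{nakanishialexandermatrix}, so Gordian distance one yields Laurent polynomials $m(t),r(t)$ with $m(t)=m(t^{-1})$, $m(1)=\pm1$, $r(1)=0$, and
\[\triangle_{K_b}(t)=\pm\big(\triangle_{K_a}(t)\,m(t)-r(t)\,r(t^{-1})\big).\]
I would evaluate this identity at a root $\theta$ of $t\,\triangle_{K_a}(t)=m_a t^2+(1-2m_a)t+m_a$. Since the product of the roots is $1$, we have $\theta^{-1}=\bar\theta$, the Galois conjugate of $\theta$ over $\Q_p$; as $\triangle_{K_a}(\theta)=0$, the identity collapses to $\triangle_{K_b}(\theta)=\mp\, r(\theta)\,\overline{r(\theta)}=\mp\, N_{\Q_{p^2}/\Q_p}\big(r(\theta)\big)$, a norm from the unramified quadratic extension. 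On the other hand, a direct computation using $\theta+\theta^{-1}=(2m_a-1)/m_a$ gives $\triangle_{K_b}(\theta)=(m_a-m_b)/m_a$. Because $m_a$ is a $p$-adic unit and the norm group of an unramified quadratic extension is precisely the subgroup of elements of even valuation, I conclude that $v_p(m_b-m_a)$ must be even.

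This is contradicted by the construction. For $a=\triangle_n$ and $b=\triangle_{n+\ell p^{2s+1}}$ one has $m_b-m_a=\ell p^{2s+1}$, while for two elements of the second family one has $m_b-m_a=\ell p^{2s_1+1}(p^{2(s_2-s_1)}-1)$; since $\ell$ is coprime to $p$, both have odd $p$-adic valuation. This contradicts the conclusion above, so no single crossing change can connect $K_a$ and $K_b$, and their Gordian distance is at least two.

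The hard part will be removing the Nakanishi-index-one assumption, since an arbitrary knot realizing $\triangle_m$ need not have cyclic Alexander module (nor genus one), and the clean binary-form manipulation breaks down for higher rank. For the general case I would replace the $1\times 1$ matrix by the Blanchfield pairing of $K_a$ localized at $p$, which is Kawauchi's framework: because $\triangle_{K_a}(-1)=1-4m_a$ is a $p$-adic unit, this pairing is nondegenerate over $\Q_p$ and is supported on the unramified quadratic algebra $\Q_{p^2}$, and a single crossing change alters its Witt class only by a rank-one, norm-type correction. Making precise that this correction multiplies the relevant discriminant $(m_b-m_a)/m_a$ by a norm, together with the attendant sign and $\Q_2$-bookkeeping, is the technical heart; the number-theoretic input (irreducibility of $t\,\triangle_{K_a}(t)$ via Hensel's lemma and the identification of the norm group with the even-valuation subgroup) is then exactly what the quadratic-nonresidue and coprimality hypotheses supply.
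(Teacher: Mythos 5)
First, note that the paper does not prove this statement at all: it is quoted verbatim as Corollary~4.2 of \cite{kawauchi}, so there is no in-paper proof to compare against. Judged on its own terms, your local computation in the Nakanishi-index-one case is correct and identifies the right mechanism: the root $\theta$ of $m_at^2+(1-2m_a)t+m_a$ generates the unramified quadratic extension of $\Q_p$ (here the coprimality and nonresidue hypotheses, and the $1-4m\equiv 5\bmod 8$ observation for $p=2$, enter exactly as you say), the identity from Lemma~\ref{nakanishialexandermatrix} collapses at $\theta$ to $(m_a-m_b)/m_a=\mp N_{\Q_{p^2}/\Q_p}(r(\theta))$, and the norm group being the even-valuation subgroup contradicts $v_p(m_b-m_a)$ odd. (The degenerate case $r(\theta)=0$ forces $a=b$, so it causes no trouble.)

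The genuine gap is the one you flag yourself and then do not close: the proposition quantifies over \emph{all} knots realizing these polynomials, and such knots need not have Nakanishi index one (e.g., a knot whose Alexander module is $\Lambda/(t-2)\oplus\Lambda/(t^{-1}-2)$ realizes $-2(t+t^{-1})+5$ with a two-generator module). For a general $K_a$ one must use an $n\times n$ matrix $A_{K_a}(t)$ satisfying (a) and (b), and the bordered-determinant identity evaluated at $\theta$ gives $\triangle_{K_b}(\theta)=\mp r(\theta)^T\,\mathrm{adj}\bigl(A_{K_a}(\theta)\bigr)\,r(\bar\theta)$. Condition (a) does put the entries of $A_{K_a}(\theta)$ in $\Z_p$ and forces $\mathrm{adj}(A_{K_a}(\theta))$ to have rank at most one, but nothing in (a)--(b) makes $A_{K_a}(t)$ symmetric, so the adjugate factors as $uv^T$ with $u$ and $v$ a priori unrelated, and the resulting product $(r(\theta)^Tu)\,(v^T r(\bar\theta))$ is not visibly a norm. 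Supplying the missing hermitian structure (via the Blanchfield or linking form, which is how Kawauchi actually argues) is exactly the ``technical heart'' you defer, so as written the proposal proves only a special case of the statement.
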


We can characterize the knots Kawauchi has shown not to be complete Alexander neighbors here. First notice that any Alexander polynomial of breadth 2 of a knot $K$ can be written in the form $\triangle_{K}(t)=n(t+t^{-1})+1-2n$ for some nonzero integer $n$. 

\begin{theorem}
\label{breadth2thm}
An Alexander polynomial of breadth 2, $q(t)=n(t+t^{-1})+1-2n$ is contained in $S_{p, n, \ell}$ for some $p, n,$ and $\ell$ as defined in Proposition \ref{kawauchisets} if and only if $1-4n$ is not a square.
\end{theorem}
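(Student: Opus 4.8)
The plan is to strip the statement down to a purely number-theoretic existence question and then resolve it with quadratic reciprocity, the same tool already used in Lemma \ref{quadresidues}. First I would observe that, for the fixed integer $n$ appearing in $q$, the polynomial $q(t)=n(t+t^{-1})+1-2n$ is \emph{precisely} the distinguished first element of $S_{p,n,\ell}$, and conversely that $q$ can occur in $S_{p,n,\ell}$ only as that first element: a family element $(n+\ell p^{2s+1})(t+t^{-1})+1-2(n+\ell p^{2s+1})$ equals $q$ only if $\ell p^{2s+1}=0$, which is impossible since $\ell$ is coprime to $p$. Hence $q\in S_{p,n,\ell}$ for some admissible parameters if and only if an admissible pair $(p,\ell)$ exists at all. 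Since $\ell=1$ is coprime to every prime, the theorem reduces to the claim that an admissible prime $p$ exists, where \emph{admissible} means $p\nmid n$ and, when $p$ is odd, $p\nmid(1-4n)$ and $1-4n$ is a quadratic nonresidue mod $p$, if and only if $1-4n$ is not a perfect square.

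For the ``if'' direction I would assume $1-4n$ is not a square and invoke the standard fact that a non-square integer is a quadratic nonresidue modulo infinitely many odd primes. Discarding the finitely many prime divisors of $n$, I can select such a prime $p$ with $p\nmid n$; then $p\nmid(1-4n)$ is automatic, since a residue class divisible by $p$ is never a nonresidue. Taking $\ell=1$ verifies every hypothesis of Proposition \ref{kawauchisets}, so $q\in S_{p,n,1}$. When $n$ is odd one may shortcut this by taking $p=2$, for which no residue condition is imposed; consistently, $1-4n$ is automatically a non-square when $n$ is odd, as the computation in the next paragraph shows.

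For the ``only if'' direction I would argue the contrapositive. Suppose $1-4n=k^2$. Since $1-4n\equiv 1\bmod 4$ we have $k$ odd, so writing $k=2j+1$ gives $n=-j(j+1)$, which is even; in particular $p=2$ is disallowed, as it requires $n$ coprime to $2$. For any odd prime $p$ with $p\nmid(1-4n)$, the residue $1-4n\equiv k^2$ is a nonzero square and hence a quadratic residue, violating the nonresidue requirement. Thus no admissible $p$ exists and $q$ lies in no set $S_{p,n,\ell}$.

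The main obstacle is the existence statement powering the ``if'' direction, namely that the non-square $1-4n$ is a quadratic nonresidue modulo infinitely many primes; this is where quadratic reciprocity combines with Dirichlet's theorem, the character $p\mapsto\left(\frac{1-4n}{p}\right)$ being a nontrivial real Dirichlet character precisely because $1-4n$ is not a square. One subtlety to track is that $1-4n$ is negative when $n>0$, but negative integers are never perfect squares, so both the non-square hypothesis and the nonresidue fact apply verbatim once the factor $\left(\frac{-1}{p}\right)$ is accounted for in the reciprocity computation. Finally, should one read the theorem with the set's index allowed to differ from the coefficient of $q$, the ``only if'' direction still survives a short congruence check: realizing $q$ as a family element of $S_{p,n',\ell}$ forces $n\equiv n'\bmod p$, whence $1-4n\equiv 1-4n'\bmod p$, and a perfect square cannot equal the nonresidue $1-4n'$ demanded by admissibility.
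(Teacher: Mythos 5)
Your proof is correct and follows essentially the same route as the paper's: both directions rest on the fact that a non-square integer is a quadratic nonresidue modulo infinitely many primes (with finitely many bad primes discarded to ensure coprimality), and both dispose of the $p=2$ case via the parity of $n$ and the observation that $1-4n$ cannot be a square when $n$ is odd. Your extra care about whether the index $n$ of $S_{p,n,\ell}$ matches the coefficient of $q$ is a reasonable refinement but does not change the argument.
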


\begin{proof}
Let $q(t)=n(t+t^{-1})+1-2n$ be an Alexander polynomial of breadth 2 for some $n \in \mathbb{Z}$.

Assume $1-4n$ is not a square. First notice that for all non-square $x$, there exist infinitely many primes $p$ such that $x$ is a quadratic nonresidue mod $p$. Since $1-4n$ is not a square, there exist infinitely many primes $p_i$ such that $1-4n$ is a quadratic nonresidue mod $p_i$, so there exists such a prime $p_k$ such that $|1-4n|<p_k$ and $n<p_k$, so $1-4n$ and $n$ are coprime to $p_k$. Therefore, $q(t)\in S_{p_k,n,\ell}$ for some $\ell$ as defined in Proposition \ref{kawauchisets}.

Assume $q(t)\in S_{p,n,\ell}$ for some $p, n,$ and $\ell$ as defined in Proposition \ref{kawauchisets}. Then notice that either $1-4n$ is a quadratic nonresidue mod $p$ where $p$ is prime or $p=2$ and $n$ is coprime to $p$, meaning that $n$ is odd.

In the case where $1-4n$ is a quadratic nonresidue mod $p$, then $1-4n$ must not be a square.

In the case where $n$ is odd, $1-4n\equiv 5\mod 8$. Notice that odd squares are congruent to 1 mod $8$ since $(2x+1)^2=4x^2+4x+1=4x(x+1)+1$ and $x(x+1)$ must be even for any positive integer $x$. Therefore, $1-4n$ is not a square.
\end{proof}

Theorems \ref{nakanishiindexdet} and \ref{breadth2thm} yield Corollary \ref{breadthcor}.

\begin{proof}[Proof of Corollary \ref{breadthcor}]
Let $\triangle_K(t)=n(t+t^{-1})+1-2n$ be an Alexander polynomial of breadth 2 of a knot $K$ for some nonzero $n \in \mathbb{Z}$. In the case where $u_a(K)>1$, $K$ is not a complete Alexander neighbor, so we may assume $n(K)\leq u_a(K)=1$. Since $\triangle_K(t)$ has breadth 2 and so is nontrivial, we have $n(K)=1$. Notice that \[\det K=\begin{cases}
    1-4n &n<0\\
    -1+4n &n>0
\end{cases},\] so in the case where $n$ is negative, $\det K\equiv 1 \mod 4$ and $5\leq 1-4n$, meaning that $K$ is not a complete Alexander neighbor by Theorem \ref{nakanishiindexdet}. In the case where $n$ is positive, we have $1-4n<0$, so $1-4n$ is not a square. Therefore, $K$ is not a complete Alexander neighbor by Theorem \ref{breadth2thm} and Proposition \ref{kawauchisets}.
\end{proof}

Theorem 3 from \cite{kondo} by Kondo states that every Alexander polynomial is realized by a knot with unknotting number one and thus algebraic unknotting number one. Thus, Theorem \ref{nakanishiindexdet} proves that infinitely many knots are not complete Alexander neighbors. As an example, \[1+\sum_{i=1}^n\left((t^{2i}+t^{-2i})-(t^{2i-1}+t^{-2i+1})\right)\] for $n \in \mathbb{N}$ is an infinite class of Alexander polynomials with breadth $4n$ and determinant $1+4n$, so there exist infinitely many knots with unknotting number one, and thus algebraic unknotting number one and Nakanishi index one, realizing this class of Alexander polynomials which are eliminated from being a complete Alexander neighbor by Theorem \ref{nakanishiindexdet} and not by Corollary \ref{breadthcor} or their algebraic unknotting number.

Similarly, since every Alexander polynomial is realized by a knot with unknotting number one, Corollary \ref{breadthcor} proves that infinitely many knots are not complete Alexander neighbors. For example,
\[n(t+t^{-1})+1-2n\] for all nonzero integers $n$ is a collection of Alexander polynomials with breadth $2$ and determinant $|1-4n|$ including infinitely many prime determinants congruent to $3 \mod 4$, which are each realized by a knot with unknotting number one, and thus algebraic unknotting number one. Therefore, infinitely many knots are eliminated by Corollary \ref{breadthcor} and not by Theorem \ref{nakanishiindexdet} or their algebraic unknotting number.

Together, these three methods of proving that a knot is not a complete Alexander neighbor applies to all knots which meet at least one of the following criteria:
\begin{enumerate}[(a)]
    \item has algebraic unknotting number greater than one (which applies to 1,546 of the 2,977 prime knots with crossing number 12 or less),

    \item has determinant which is composite or congruent to $1$ mod $4$ (which applies to 2,392 of the 2,977 prime knots with crossing number 12 or less), or

    \item has Alexander polynomial of breadth 2 (which applies to 36 of the 2,977 prime knots with crossing number 12 or less).
\end{enumerate}

All together, this eliminates 2,528 of the 2,977 prime knots with 12 crossings or fewer. There are many very small candidates for a complete Alexander neighbor with nontrivial Alexander polynomial which are not yet eliminated. Through eight crossings these are $6_2$, $7_6$, $8_4$, $8_6$, $8_7$, and $8_{14}$.

All knots $K$ for which it is unresolved whether $K$ is a complete Alexander neighbor have algebraic unknotting number one and thus Nakanishi index one, meaning that $(\triangle_{K}(t))$ is an Alexander matrix for $K$, so we can restate the question of whether $K$ is a complete Alexander neighbor (see Propositions 5 and 6 in \cite{nakanishiokada}). $K$ is not a complete Alexander neighbor if and only if there exists an Alexander polynomial $p(t)$ such that for all Laurent polynomials $r(t)$ where $r(1)=0$, we have \begin{align*}
    p(t)&\not\equiv r(t) r(t^{-1})\mod \triangle_{K}(t)\text{ and}\\
    p(t)&\not\equiv - r(t) r(t^{-1})\mod \triangle_{K}(t).
\end{align*} This may give a new approach to Question \ref{open}.

For another possible method of investigating knots for which it is unresolved whether they are a complete Alexander neighbor, consider those with monic Alexander polynomial.  For example, the knots $6_2$, $7_6$, and $8_7$ have monic Alexander polynomial, which means we can use Nakanishi and Okada's algorithm used on the knots $3_1$ and $4_1$ in \cite{nakanishiokada} and the knots $5_1$ and $10_{132}$ in \cite{nakanishi} to characterize the set of Alexander polynomials realized by their Gordian neighbors. This characterization might be useful to determine whether this set includes all Alexander polynomials.

\section{Obstructions to Unknotting Number One}
We can leverage the effect of a single crossing change in a knot $K$ on the determinant or on the double cover $M_K$ of $S^3$ branched over $K$ to obtain two obstructions to unknotting number one. One was described by Lickorish in 1985 \cite{lickorish}. Another follows from work by Nakanishi and Okada in 2012 \cite{nakanishiokada}. Using these obstructions, we can show that many knots have unknotting number greater than one through simple computations, where the two obstructions are similar, though neither subsumes the other.

\begin{lemma}\label{detcondition} (Proposition 13 in \cite{nakanishiokada}) Let $K$ be a knot and $K'$ be a knot one crossing change away from $K$. If $K$ has unknotting number 1, then $\pm\det K' \equiv -n^2 \mod \det K$ for some integer $n$.
\end{lemma}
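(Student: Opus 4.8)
The plan is to deduce the statement from Lemma \ref{nakanishialexandermatrix} together with the inequality $n(K)\le u_a(K)\le u(K)$ established earlier, following the same evaluation-at-$t=-1$ computation that appears in the proof of Theorem \ref{nakanishiindexdet}. First I would reduce the hypothesis on the unknotting number to one on the Nakanishi index. Since $u(K)=1$, the chain $n(K)\le u_a(K)\le u(K)=1$ forces $n(K)\in\{0,1\}$. If $n(K)=0$, the Alexander module of $K$ is trivial, so $\det K=1$ and the asserted congruence holds trivially (both sides vanish modulo $1$, taking $n=0$). Thus I may assume $n(K)=1$.

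With $n(K)=1$, the $1\times 1$ matrix $[\triangle_K(t)]$, in the symmetric normalization $\triangle_K(t)=\triangle_K(t^{-1})$ with $\triangle_K(1)=1$, is an Alexander matrix of $K$ satisfying conditions (a) and (b) of Lemma \ref{nakanishialexandermatrix}. Applying that lemma to the knot $K'$, which is one crossing change away from $K$, produces Laurent polynomials $m(t)$ and $r(t)$ with $m(t^{-1})=m(t)$, $m(1)=\pm 1$, $r(1)=0$, and
\[
\triangle_{K'}(t)=\pm\det\begin{pmatrix}\triangle_K(t)&r(t^{-1})\\ r(t)&m(t)\end{pmatrix}=\pm\bigl(\triangle_K(t)\,m(t)-r(t)\,r(t^{-1})\bigr).
\]

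The final step is to specialize at $t=-1$. Setting $n=r(-1)\in\mathbb{Z}$ and $b=m(-1)\in\mathbb{Z}$, and using $|\triangle_K(-1)|=\det K$ together with $|\triangle_{K'}(-1)|=\det K'$, I obtain $\pm\det K'=\pm\det K\cdot b-n^2$, whence $\pm\det K'\equiv -n^2 \pmod{\det K}$, as claimed.

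The substantive content is carried entirely by Lemma \ref{nakanishialexandermatrix}, so no genuine obstacle arises beyond two bookkeeping points. First, one must justify that $[\triangle_K(t)]$ may be taken as a bona fide Alexander matrix in the required normalization when $n(K)=1$; this is the identical normalization already invoked in the proof of Theorem \ref{nakanishiindexdet}. Second, one must track signs carefully: both $\triangle_{K'}(t)$ and $\triangle_K(-1)$ are determined only up to sign, which is precisely why the conclusion retains the $\pm$ in front of $\det K'$, while the ambient sign on the $\det K\cdot b$ term is harmless since that entire term is absorbed modulo $\det K$.
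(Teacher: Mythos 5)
Your proof is correct and, while the paper does not prove Lemma \ref{detcondition} itself (it is quoted as Proposition 13 of Nakanishi--Okada), your argument is essentially the same evaluation-at-$t=-1$ computation that the paper carries out inside the proof of Theorem \ref{nakanishiindexdet}: reduce to Nakanishi index one, apply Lemma \ref{nakanishialexandermatrix} with the $1\times 1$ matrix $[\triangle_K(t)]$, and read off $\pm\det K'\equiv -r(-1)^2 \pmod{\det K}$. The only remark worth adding is that your argument uses only $n(K)\leq 1$ rather than $u(K)=1$, which is precisely the strengthening the paper exploits to prove Theorem \ref{nakanishiindexdet}.
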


Therefore, by the contrapositive of Lemma \ref{detcondition}, given any knot $K$ where there exists a knot $K'$ one crossing change away such that $\mp\det K'$ is a quadratic nonresidue mod $\det K$, we have that $K$ has unknotting number greater than one.

Note that it is necessary for both $\det K'$ and $-\det K'$ to be a quadratic nonresidue mod $\det K$ to conclude that $K$ has unknotting number greater than one. For example, $3_1$ and $5_2$ are unknotting number one knots one crossing change apart with determinants $3$ and $7$ respectively. We have that $3$ is a quadratic nonresidue mod $7$, but $-3\equiv 4 \mod 7$ is a quadratic residue mod $7$. We also have that $-7\equiv 2 \mod 3$ is a quadratic nonresidue mod $3$ and $7\equiv 1\mod 3$ is a quadratic residue mod $3$.

In the KnotInfo database \cite{knotinfo}, we can use this observation to show that $11n_{162}, 12n_{805}$, $12n_{814},12n_{844},$ and $12n_{856}$ have unknotting number greater than one, where this was previously unknown in the database. This shows that $11n_{162}$ has unknotting number 2 and constrains the others to be $2$ or $3$.

\begin{figure}[h]
    \centering
    \includegraphics[height=10.9cm]{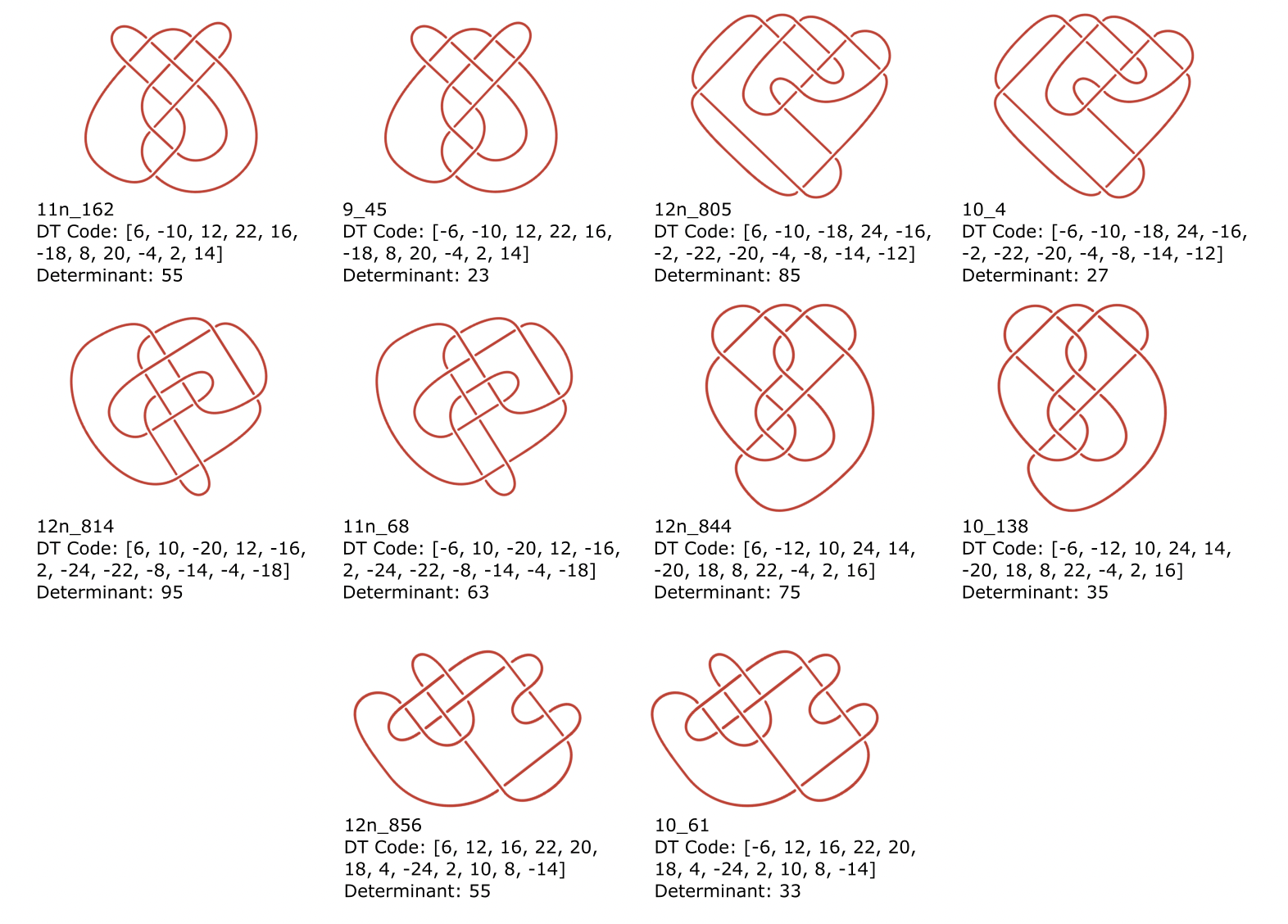}
    \caption{The knots $11n_{162}, 12n_{805}, 12n_{814},12n_{844},$ and $12n_{856}$ along with a knot one crossing change away from each of these. Under each knot is their name, a DT Code for the pictured diagram, and the knot's determinant.}
    \label{fig:knots}
\end{figure}

\begin{proof}[Proof of Theorem \ref{unknottingnumberresult}]
The knot $11n_{162}$ has determinant $55$ and DT code

\noindent$[6, -10, 12, 22, 16, -18, 8, 20, -4, 2, 14]$ in KnotInfo \cite{knotinfo}. We can change the sign of the first entry in the DT code to obtain $9_{45}$, a knot one crossing change away from $11n_{162}$. The determinant of $9_{45}$ is $23$. Since $23$ and $-23$ are both quadratic nonresidues mod $55$, by Lemma \ref{detcondition}, $11n_{162}$ has unknotting number greater than one.

In Figure \ref{fig:knots}, we see a knot one crossing change away from $11n_{162}, 12n_{805}, 12n_{814},12n_{844},$ and $12n_{856}$ whose determinant satisfies the contrapositive of Lemma \ref{detcondition}. Therefore, using a similar argument to the one above for $11n_{162}$, we conclude the proof.
\end{proof}

We identify these knots by performing a search with code using SnapPy \cite{snappy} in Sage to compute the determinant of each knot for which it is unknown in KnotInfo \cite{knotinfo} if the unknotting number is one, and compute the determinant of each knot obtained by changing the sign of one number in the DT code recorded in KnotInfo \cite{knotinfo}. Then we check whether the determinants satisfy the condition in Lemma \ref{detcondition}.

We can also use Lickorish's obstruction to show that these knots do not have unknotting number one \cite{lickorish}. To describe Lickorish's obstruction, we need to introduce some definitions.

\begin{definition}
Let $M$ be an oriented $3$-manifold where $H_1(M)$ is finite. Then the \textbf{linking form} of $M$ is $\lambda: H_1(M)\times H_1(M)\rightarrow \mathbb{Q}/\mathbb{Z}$ as defined below. Let $[\alpha], [\beta] \in H_1(M)$ represented by 1-cycles $\alpha$ and $\beta$ in $M$ respectively. Then $n\alpha$ bounds a disk $D$ for some integer $n$. Define $\lambda([\alpha],[\beta])=\frac{1}{n}i(D,\beta)$ where $i(D,\beta)$ is the intersection number of $D$ and $\beta$.
\end{definition}

\begin{definition}
Let $D$ be a connected, checkerboard colored diagram of a knot $K$. Let $R_0, R_1, ..., R_n$ be the white regions of $D$. Assign each crossing of $D$ a sign $\pm 1$ as in Figure \ref{fig:goeritz}. Let $g_{ij}$ be the sum of the signs of the crossings abutted by the white regions $R_i$ and $R_j$ for $0\leq i,j\leq n$ where $i\neq j$ and let $g_{ii}=-\displaystyle\sum_{i\neq j}g_{ij}$. A \textbf{Goeritz matrix} $G_K$ of $K$ is the $n\times n$ matrix $(g_{ij})_{1\leq i,j\leq n}$. Note that this eliminates all $g_{ij}$ where $i=0$ or $j=0$.
\end{definition}

\begin{figure}[h!]
    \centering
    \includegraphics[height=3cm]{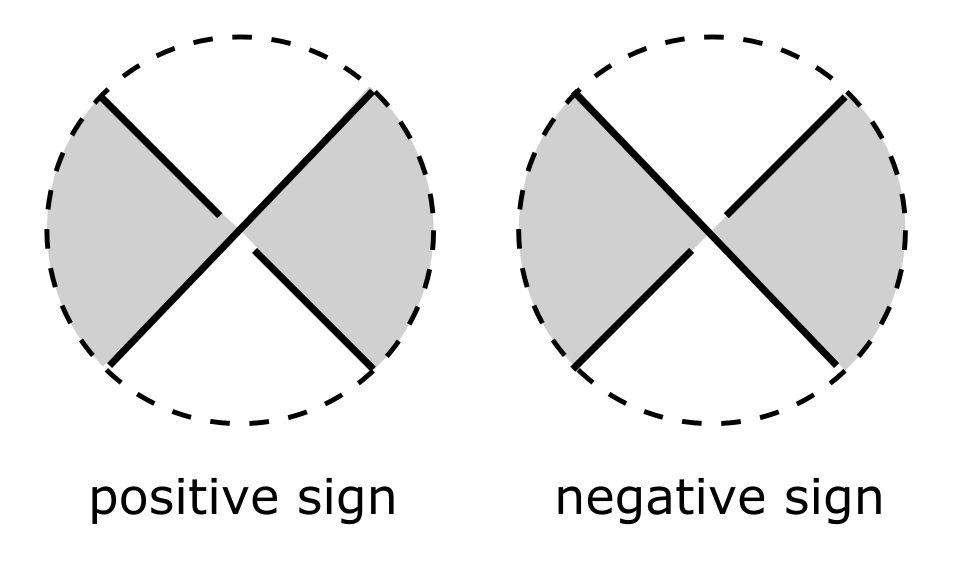}
    \caption{These are the sign conventions used in the definition of a Goeritz matrix.}
    \label{fig:goeritz}
\end{figure}

\begin{lemma} \label{lickorishlemmas} (Lemmas 1 and 2 in \cite{lickorish}) If $K$ is a knot with unknotting number one, then $M_K$ is obtained by $\pm\frac{\det K}{2}$-surgery on a knot in $S^3$ and $H_1(M_K)$ is cyclic with a generator $g$ such that $\lambda (g,g)=\frac{2}{\det K}\in \mathbb{Q}/\mathbb{Z}$.
\end{lemma}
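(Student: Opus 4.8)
The plan is to establish both claims of Lemma \ref{lickorishlemmas} by analyzing how a single crossing change is realized by surgery and what it does to the double branched cover $M_K$. The key fact I would use is that a crossing change on $K$ can be effected by $\pm 1$-surgery on an unknotted circle $c$ encircling the two strands at the crossing. Lifting this picture to the double branched cover, the circle $c$ lifts to a knot $\widetilde{c} \subset S^3$ (since $c$ bounds a disk meeting $K$ in two points, its preimage is connected), and the $\pm 1$-surgery on $c$ lifts to a surgery on $\widetilde{c}$. If $K$ has unknotting number one, then one crossing change turns $K$ into the unknot, whose double branched cover is $S^3$. I would run this argument in reverse: starting from $S^3 = M_{\text{unknot}}$, the single crossing change tells us that $M_K$ is obtained from $S^3$ by a single Dehn surgery on the lifted knot $\widetilde{c}$.

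Next I would pin down the surgery coefficient and the order of $H_1(M_K)$. Since $|H_1(M_K)| = \det K$, and Dehn surgery with slope $p/q$ on a knot in $S^3$ yields a manifold with $|H_1| = |p|$, the surgery must be along a slope whose numerator has absolute value $\det K$. A careful bookkeeping of how the $\pm 1$-surgery on $c$ lifts (the two preimage arcs of each strand, together with the branching behavior) shows that the resulting slope on $\widetilde{c}$ is an integer, namely $\pm \det K / 2$; this is where the even/odd structure of the linking numbers of the lifted strands enters, and it is consistent with $\det K$ being odd so that $\det K / 2$ is a half-integer surgery in the usual normalization. I would make this precise by computing the linking form of the surgered manifold directly from the surgery description.

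For the linking form claim, I would use that surgery on a single knot in $S^3$ with coefficient $p/q$ produces a manifold whose first homology is cyclic of order $|p|$, generated by the core of the surgery solid torus (or the meridian of $\widetilde{c}$), with self-linking $\lambda(g,g) = q/p \in \Q/\Z$ up to the usual sign conventions. Substituting the coefficient $\pm \det K / 2$ found above, the generator $g$ satisfies $\lambda(g,g) = \pm 2 / \det K$, and after choosing the generator appropriately we get exactly $\lambda(g,g) = 2/\det K$. The cyclicity of $H_1(M_K)$ follows immediately because it is presented by a $1 \times 1$ matrix coming from the single-curve surgery.

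The main obstacle I expect is the precise determination of the surgery slope and the sign, that is, showing the coefficient is exactly $\pm \det K / 2$ rather than some other slope with the correct numerator. This requires carefully tracking the framing induced on $\widetilde{c}$ by the $\pm 1$-framing on $c$ under the branched double cover, including the contribution of the branch locus to the self-linking of $\widetilde{c}$; the standard tool here is the Montesinos trick, which identifies $M_K$ for an unknotting-number-one knot as half-integer surgery on a knot, and I would lean on that to handle the framing computation cleanly rather than computing linking numbers of lifts by hand.
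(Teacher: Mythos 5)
The paper does not actually prove this lemma; it is quoted verbatim as Lemmas 1 and 2 of Lickorish \cite{lickorish}, so there is no in-paper argument to compare against. Your outline is essentially Lickorish's original argument via the Montesinos trick: realize the unknotting crossing change inside a ball, pass to double branched covers to exhibit $M_K$ as a single Dehn surgery on a knot in $S^3=M_{\text{unknot}}$, read off $|H_1(M_K)|=\det K$ from the numerator of the slope, identify the slope as half-integral, and compute the linking form of a $p/q$-surgery as $\pm q/p$ on the core. That is the right strategy and it does prove the statement.

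One step as written is incorrect, however. You justify connectivity of the lift of $c$ by the fact that $c$ bounds a disk meeting $K$ in two points; but precisely because of this (equivalently, $\lk(c,K)=0$), the class of $c$ maps to $0$ under $\pi_1(S^3\setminus K)\to\Z/2$, so $c$ lifts to \emph{two} disjoint circles in the double branched cover, interchanged by the covering involution. The knot $\widetilde{c}$ on which the half-integer surgery is performed is not the preimage of $c$ but the core of the solid torus obtained as the double cover of the crossing ball branched over its two arcs (each component of the preimage of $c$ is isotopic to that core, which is how the two pictures reconcile). Since you invoke the tangle-replacement form of the Montesinos trick at the end anyway, the fix is simply to route the whole argument through it: a crossing change is a distance-two filling change upstairs, hence a $\pm\det K/2$-surgery once the numerator is pinned down by $|H_1(M_K)|=\det K$, and $p/q$-surgery on a knot in $S^3$ yields cyclic first homology with self-linking $\mp q/p$ on a generator. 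A small additional caveat: you cannot in general normalize the sign of $\lambda(g,g)$ by changing the generator, since replacing $g$ by $tg$ multiplies $\lambda(g,g)$ by a square; the sign is fixed by orientation conventions, and is immaterial to the paper's application, which compares against $\pm(G_K^{-1})_{i,i}$.
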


\begin{lemma} \label{inversegoeritz} (page 253 in \cite{zhang}, page 761 of \cite{stoimenow}) Given a knot $K$, the linking form $\lambda$ of $M_K$ is given by $\pm (G_K)^{-1}$, meaning that $\lambda(g_i,g_j)=\pm(G_K^{-1})_{i,j}$ in $\mathbb{Q}/\mathbb{Z}$ where $\{g_1,g_2,...,g_n\}$ is a generating set of $H_1(M_K)$.
\end{lemma}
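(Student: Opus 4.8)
The final statement is Lemma~\ref{inversegoeritz}: given a knot $K$, the linking form $\lambda$ of the double branched cover $M_K$ is computed by $\pm(G_K)^{-1}$, where $G_K$ is a Goeritz matrix of $K$. I want to identify $H_1(M_K)$ with the cokernel of the Goeritz matrix acting on $\Z^n$ and then show the linking form corresponds to the inverse matrix reduced mod $\Z$.

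Let me think about how to prove this.

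The plan is to exhibit $M_K$ as the boundary of a $4$-manifold whose intersection form is the Goeritz matrix, and then to use the standard identification of the linking form of a $3$-manifold boundary with the inverse of the intersection form of the filling.

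First I would recall the Gordon--Litherland description of the double branched cover. Let $F$ be one of the two checkerboard spanning surfaces of $K$ determined by $D$, and let $\hat F \subset B^4$ be the surface obtained by pushing the interior of $F$ into $B^4$, so that $\partial \hat F = K \subset S^3 = \partial B^4$. Let $X$ be the double cover of $B^4$ branched over $\hat F$. Then $\partial X = M_K$, the manifold $X$ is simply connected, $H_2(X)$ is free abelian of rank $n$ with a basis indexed by the bounded white regions $R_1, \dots, R_n$, and in this basis the intersection form of $X$ equals $\pm G_K$. The generators $g_1, \dots, g_n$ of $H_1(M_K)$ will be the images of the corresponding dual basis of $H_2(X, M_K)$.

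Second, since the knot determinant $\det K = |\det G_K|$ is a nonzero (odd) integer, $G_K$ is nonsingular, so $M_K$ is a rational homology sphere and $H_2(M_K) = 0$. Because $H_1(X) = 0$ and $H_2(M_K) = 0$, the long exact sequence of the pair $(X, M_K)$ together with Poincar\'e--Lefschetz duality and universal coefficients gives the short exact sequence
\[
0 \longrightarrow H_2(X) \xrightarrow{\ G_K\ } \operatorname{Hom}(H_2(X), \Z) \longrightarrow H_1(M_K) \longrightarrow 0,
\]
in which the left-hand map is the adjoint of the intersection form. Hence $H_1(M_K) \cong \operatorname{coker}(G_K)$, with $g_i$ the image of the $i$-th dual basis vector $f_i \in \operatorname{Hom}(H_2(X), \Z)$.

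Third I would compute the linking form from this presentation. Given $g_i$ and $g_j$, lift them to $f_i, f_j$ as above. As $H_1(M_K)$ is finite, there is an integer $k$ and a class $z \in H_2(X)$ with $G_K(z) = k f_i$; writing $z = \sum_a z_a e_a$ and using the symmetry of $G_K$, the equation $z^{T} G_K = k\, e_i^{T}$ forces $z_a = k (G_K^{-1})_{ia}$. Interpreting the linking form through $X$ in the manner of the definition above---represent $k g_i$ by a cycle bounding the surface $z$ in $X$ and intersect with $f_j$---yields
\[
\lambda(g_i, g_j) = \tfrac{1}{k}\,(z \cdot f_j) = \tfrac{1}{k}\, z_j = (G_K^{-1})_{ij} \in \Q/\Z.
\]
Tracking the orientation conventions for the double branched cover and for $F$ produces the global sign $\pm$.

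The step I expect to be the main obstacle is the first one: verifying with care that the intersection form of the branched cover $X$ is exactly $\pm G_K$ under the sign conventions of Figure \ref{fig:goeritz}, and matching the resulting cokernel generators with the classes $g_i$ so that the abstract computation lands on the correctly indexed entries of $G_K^{-1}$. Once this dictionary is fixed, the homological computation of $H_1(M_K)$ and the linear-algebra computation of $\lambda$ are routine; the only residual subtlety is the overall sign, which is exactly why the statement is asserted only up to $\pm$.
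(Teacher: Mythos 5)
The paper does not actually prove this lemma---it is quoted from Zhang and Stoimenow, so there is no in-paper argument to compare against. Your proposal supplies the standard proof that those references rely on, and it is correct in outline: realize $M_K$ as the boundary of the double cover $X$ of $B^4$ branched over the pushed-in checkerboard surface, invoke Gordon--Litherland to identify the intersection form of $X$ with $\pm G_K$, use the exact sequence of the pair $(X,M_K)$ to present $H_1(M_K)$ as $\mathrm{coker}(G_K)$, and then read off the linking form as $\pm(G_K)^{-1}$ by the usual linear algebra. Two points deserve flagging. First, the entire geometric content is concentrated in the Gordon--Litherland theorem (that $X$ is simply connected, $H_2(X)\cong H_1(F)$, and the intersection form in the white-region basis is the Goeritz matrix with the sign conventions of Figure \ref{fig:goeritz}); you correctly identify this as the main obstacle but do not prove it, so your argument is really a reduction to that theorem rather than a self-contained proof---which is a reasonable thing to do for a result of this type. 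Second, your step ``represent $kg_i$ by a cycle bounding the surface $z$ in $X$ and intersect with $f_j$'' conflates the intrinsic definition of $\lambda$ (where the bounding $2$-chain lives in $M_K$ itself) with a computation in the filling $X$; the bridge between the two is the standard lemma that for a rational homology sphere bounding a $4$-manifold with $H_1=0$ and nondegenerate intersection form $A$, the linking form of the boundary is presented by $-A^{-1}$. That lemma needs at least a sentence of justification (push the bounding chain into $X$ and compare relative intersection numbers), but it is routine and the overall sign ambiguity it introduces is exactly the $\pm$ in the statement.
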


\begin{proof}[Alternate proof of Theorem \ref{unknottingnumberresult}]
First notice that in SnapPy, we can see
\begin{align*}
&G_{11n_{162}}^{-1}=\begin{pmatrix}\frac{16}{55}&\frac{8}{55}&\frac{1}{5}&\frac{3}{55}&\frac{6}{55}\\[6pt]\frac{8}{55}&\frac{4}{55}&\frac{3}{5}&\frac{29}{55}&\frac{3}{55}\\[6pt]\frac{1}{5}&\frac{3}{5}&\frac{1}{5}&\frac{3}{5}&\frac{1}{5}\\[6pt]\frac{3}{55}&\frac{29}{55}&\frac{3}{5}&\frac{4}{55}&\frac{8}{55}\\[6pt]\frac{6}{55}&\frac{3}{55}&\frac{1}{5}&\frac{8}{55}&\frac{16}{55}\end{pmatrix}, G_{12n_{805}}^{-1}=\begin{pmatrix}\frac{4}{17}&\frac{3}{17}&\frac{2}{17}&\frac{4}{17}\\[6pt]\frac{3}{17}&\frac{41}{85}&\frac{16}{85}&\frac{32}{85}\\[6pt]\frac{2}{17}&\frac{16}{85}&-\frac{29}{85}&\frac{27}{85}\\[6pt]\frac{4}{17}&\frac{32}{85}&\frac{27}{85}&\frac{54}{85}\end{pmatrix},\\ &G_{12n_{814}}^{-1}=\begin{pmatrix}\frac{36}{95}&\frac{3}{19}&-\frac{2}{95}&\frac{22}{95}&\frac{12}{95}\\[6pt]\frac{3}{19}&\frac{6}{19}&\frac{3}{19}&\frac{5}{19}&\frac{1}{19}\\[6pt]-\frac{2}{95}&\frac{3}{19}&-\frac{21}{95}&\frac{41}{95}&\frac{31}{95}\\[6pt]\frac{22}{95}&\frac{5}{19}&\frac{41}{95}&\frac{24}{95}&\frac{39}{95}\\[6pt]\frac{12}{95}&\frac{1}{19}&\frac{31}{95}&\frac{39}{95}&\frac{4}{95}\end{pmatrix}, G_{12n_{844}}^{-1}=\begin{pmatrix}\frac{4}{15}&\frac{1}{15}&\frac{1}{5}&\frac{2}{15}&\frac{2}{15}\\[6pt]\frac{1}{15}&\frac{1}{15}&\frac{3}{5}&\frac{8}{15}&\frac{2}{15}\\[6pt]\frac{1}{5}&\frac{3}{5}&\frac{1}{5}&\frac{3}{5}&\frac{1}{5}\\[6pt]\frac{2}{15}&\frac{8}{15}&\frac{3}{5}&\frac{1}{15}&\frac{1}{15}\\[6pt]\frac{2}{15}&\frac{2}{15}&\frac{1}{5}&\frac{1}{15}&\frac{4}{15}\end{pmatrix}, \text{ and}
\end{align*}
\begin{align*}
\hspace{-6.53cm}G_{12n_{856}}^{-1}=\begin{pmatrix}-\frac{14}{55}&\frac{27}{55}&\frac{2}{5}&\frac{19}{55}\\[6pt]\frac{27}{55}&\frac{54}{55}&\frac{4}{5}&\frac{38}{55}\\[6pt]\frac{2}{5}&\frac{4}{5}&\frac{4}{5}&\frac{3}{5}\\[6pt]\frac{19}{55}&\frac{38}{55}&\frac{3}{5}&\frac{41}{55}\end{pmatrix}.
\end{align*}

Assume for contradiction that $11n_{162}$ has unknotting number one. Then, by Lemma \ref{lickorishlemmas}, $H_1(M_{11n_{162}})$ is cyclic with a generator $g$ such that $\lambda (g,g)=\frac{2}{55}\in \mathbb{Q}/\mathbb{Z}$. Since $(G_{11n_{162}}^{-1})_{1,1}=\frac{16}{55}$, we have by Lemma \ref{inversegoeritz} that there exists $g_1\in H_1(M_{11n_{162}})$ such that $\lambda(g_1,g_1)=\pm \frac{16}{55}$. Since $H_1(M_{11n_{162}})$ is cyclic with a generator $g$, we have that $g_1=tg$ for some integer $t$, so \[\pm \frac{16}{55}=\lambda(g_1,g_1)=\lambda(tg,tg)=t^2\lambda(g,g)=t^2\frac{2}{55}\]
in $\mathbb{Q}/\mathbb{Z}$. Therefore $t^2\equiv \pm 8 \mod 55$, but $8$ and $-8$ are not a quadratic residues mod $55$, which is a contradiction.

Using a similar argument to the one above for $11n_{162}$, and the Goeritz matrices above, we conclude the proof.
\end{proof}

It is difficult to show that the first obstruction to unknotting number one does not apply to a particular knot since there are infinitely many crossing changes to check for the condition on determinants in Lemma \ref{detcondition}, however when we only check each crossing change done by a single sign change in the DT code for each knot recorded in KnotInfo \cite{knotinfo} up to 12 crossings, this obstruction shows that 1,273 knots have unknotting number greater than one out of 2,472 knots which are not known to have unknotting number one.

To show that Lickorish's obstruction does not apply to a particular knot $K$ we must check that $\lambda(g,g)/2$ or $-\lambda(g,g)/2$ is a quadratic residue for each $g \in H_1(M_K)$. In the case where $H_1(M_K)$ is not cyclic, we know by Lemma \ref{lickorishlemmas} that $K$ must have unknotting number greater than one and in the case where $H_1(M_K)$ is cyclic, checking the diagonal entries of $G_K$ is sufficient to determine whether Lickorish's obstruction is applicable to $K$. However, just checking that for each entry $(G_K^{-1})_{i,i}$ along the main diagonal of the inverse of the Goeritz matrix of $K$ for each prime $K$ with up to 12 crossings, shows that 1,269 knots have unknotting number greater than one out of 2,472 knots which are not known to have unknotting number one. We also have that 11 of the remaining knots which are not known to have unknotting number one have non-cyclic $H_1(M_K)$, so must have unknotting number greater than one.

In the prime knots up to 13 crossings, there are 17 examples ($11a_{47}$, $11n_{170}$, $12a_{166}$, $12a_{615}$, $12a_{886}$, $13a_{947}$, $13a_{1237}$, $13a_{1602}$, $13a_{1853}$, $13a_{1995}$, $13a_{2005}$, $13a_{2006}$, $13a_{2649}$, $13a_{4258}$, $13n_{1663}$, $13n_{2937}$, and $13n_{2955}$) where changing some crossing in the DT code recorded in KnotInfo \cite{knotinfo} yields a knot one crossing change away which satisfies the condition on determinants from Lemma \ref{detcondition} to show that the unknotting number must be greater than one, but Lickorish's obstruction does not apply using any of the diagonal entries of the inverse of the Goeritz matrix. However, all of these examples except $11n_{170}$ and $13a_{2649}$ have non-cyclic first homology of the double cover of $S^3$ branched over the knot, which also demonstrates that these knots have unknotting number greater than one. In the prime knots up to 13 crossings, there are 4 examples ($12n_{553}$, $13a_{1448}$, $13a_{2142}$, and $13n_{3264}$) where Lickorish's argument applies to one of the diagonal entries of the inverse of the Goeritz matrix, but no crossing change in KnotInfo's saved DT code \cite{knotinfo} gives a knot satisfying the condition on determinants from Lemma \ref{detcondition}.

Of course, we can use other methods to prove that many of these knots have unknotting number greater than one, but we see here that when we use the diagonal entries of the Goeritz matrix and the crossing changes from a sign change in the DT code, on small knots these obstructions are very similar, but not the same, and apply to many knots. Also, using the condition on determinants from Lemma \ref{detcondition} has the advantage that it is possible to expand the search to different crossing changes than those from sign changes in a particular DT code.

\bibliography{mybib}{}
\bibliographystyle{plain}

\end{document}